\DeclareFontFamily{OT1}{pzc}{}
\DeclareFontShape{OT1}{pzc}{m}{it}{<-> s * [1.10] pzcmi7t}{}
\DeclareMathAlphabet{\mathpzc}{OT1}{pzc}{m}{it}
\newtheorem{theorem}{Theorem}[section]
\newtheorem{lemma}[theorem]{Lemma}
\numberwithin{equation}{section}
\def\XXint#1#2#3{{\setbox0=\hbox{$#1{#2#3}{\int}$}
\vcenter{\hbox{$#2#3$}}\kern-.5\wd0}}
\newcommand{\R}{\mathbb{R}}
\newcommand{\N}{\mathbb{N}}
\newcommand{\Z}{\mathbb{Z}}
\newcommand{\Ha}{\mathcal{H}}
\newcommand{\leb}{\mathcal{L}}
\newcommand{\sgn}{\operatorname{sgn}}
\newcommand{\spt}{\operatorname{spt}}
\newcommand{\diam}{\operatorname{diam}}
\newcommand{\Lip}{\operatorname{Lip}}
\renewcommand{\epsilon}{\varepsilon}
\DeclareMathOperator{\bI}{\mathbf{I}}
\DeclareMathOperator{\bM}{\mathbf{M}}
\DeclareMathOperator{\md}{\operatorname{md}}
\DeclareMathOperator{\mass}{\mathbf{M}}
\newcommand{\bb}[1]{\llbracket #1\rrbracket}
\newcommand{\norm}[1]{\lVert#1\rVert}
\newcommand*{\cone}{%
	{%
		\mathpalette\@coneOf{\times}%
	}%
}
\newcommand*{\@coneOf}[2]{%
	\sbox0{$\m@th#1\mathsf{#2}$}%
	\mathsf{#2}%
	\kern-\wd0 %
	\mkern2.00mu\relax
	\nonscript\mkern0.25mu\relax
	\mathsf{#2}%
}
\patchcmd{\@setaddresses}{\indent}{\noindent}{}{}
\patchcmd{\@setaddresses}{\indent}{\noindent}{}{}
\patchcmd{\@setaddresses}{\indent}{\noindent}{}{}
\patchcmd{\@setaddresses}{\indent}{\noindent}{}{}
\keywords{Lipschitz-volume rigidity, metric manifolds, integral currents}
\thanks{D.M. was supported by Swiss National Science Foundation grant 212867.}
\author{Denis Marti}
\address{Department of Mathematics\\ University of Fribourg\\  Chemin du Mus\'ee 23\\  1700 Fribourg, Switzerland}
\email{denis.marti@unifr.ch}
\title{The Lipschitz-volume rigidity problem for metric manifolds}
\begin{document}

\begin{abstract}
   We prove a Lipschitz-volume rigidity result for $1$-Lipschitz maps of non-zero degree between metric manifolds (metric spaces homeomorphic to a closed oriented manifold) and Riemannian manifolds. The proof is based on degree theory and recent developments of Lipschitz-volume rigidity for integral currents.
\end{abstract}

\maketitle

\section{Introduction}
In the study of Riemannian manifolds, Lipschitz-volume rigidity has long been considered a folklore result. In its most fundamental form, the result can be stated as follows.

\begin{theorem}\label{thme: lip-vol-rig-classic}
    Let $f\colon X \to M$ be a $1$-Lipschitz map of non-zero degree between two closed, oriented, connected Riemannian $n$-manifolds. If $\textup{Vol}(X)=\textup{Vol}(M)$, then $f$ is an isometric homeomorphism.
\end{theorem}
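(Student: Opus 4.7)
My plan is to execute the classical proof: combine Rademacher's theorem, the area formula, and elementary degree theory to force the differential of $f$ to lie in $O(n)$ almost everywhere, and then upgrade this to a global isometric homeomorphism using the degree-one conclusion that drops out of the computation.

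Since $f$ is $1$-Lipschitz, Rademacher's theorem ensures it is differentiable almost everywhere with $\|Df(x)\|_{\mathrm{op}} \le 1$, so $|J_f(x)| \le 1$ a.e. The area formula for Lipschitz maps reads
\[
\int_X |J_f(x)| \, d\textup{vol}_X(x) = \int_M N(f,y)\, d\textup{vol}_M(y),
\]
where $N(f,y) = \# f^{-1}(y)$. By Sard's theorem, almost every $y \in M$ is a regular value, and at every such $y$ the classical degree formula gives $\deg f = \sum_{x \in f^{-1}(y)} \sgn \det Df(x)$, whence $N(f,y) \ge |\deg f| \ge 1$. Chaining these bounds,
\[
\textup{Vol}(X) \ge \int_X |J_f| \, d\textup{vol}_X = \int_M N(f,y) \, d\textup{vol}_M \ge |\deg f|\cdot \textup{Vol}(M) \ge \textup{Vol}(M) = \textup{Vol}(X),
\]
so every inequality is in fact an equality. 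From this I read off three consequences: $|\deg f| = 1$; $|J_f(x)| = 1$ a.e., which combined with $\|Df(x)\|_{\mathrm{op}} \le 1$ forces $Df(x) \in O(n)$ for a.e. $x$; and $N(f,y) = 1$ for a.e. $y \in M$.

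The next step is to transfer the almost-everywhere orthogonality of $Df$ into length preservation on curves. A Fubini-type argument shows that for a generic Lipschitz curve $\gamma$ in $X$ the differential $Df$ is orthogonal at $\gamma(t)$ for a.e. $t$, so $|(f \circ \gamma)'(t)| = |\gamma'(t)|$ a.e.\ and hence $\mathrm{length}(f \circ \gamma) = \mathrm{length}(\gamma)$. Together with $N(f,y) = 1$ a.e.\ and $|\deg f| = 1$, this tells us that $f$ is infinitesimally an isometry and essentially bijective.

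The final and most delicate task is to globalize these statements. For distance preservation, $d_M(f(p), f(q)) \le d_X(p,q)$ is immediate from the Lipschitz bound; for the reverse inequality, I would lift a (nearly) minimizing geodesic in $M$ from $f(p)$ to $f(q)$ to a curve in $X$ of the same length, using essential injectivity, continuity of $f$, and a density argument to keep the lift inside the generic length-preserving family. For bijectivity, $f$ is continuous and proper between closed manifolds of the same dimension with $|\deg f| = 1$; if $f(p) = f(q)$ for some $p \ne q$, a local degree computation at both points would force $N(f,y) \ge 2$ on a set of positive measure, contradicting $N(f,y) = 1$ a.e. A distance-preserving continuous bijection between compact Riemannian manifolds is automatically an isometric homeomorphism. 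I expect this globalization step to be the principal obstacle, because promoting almost-everywhere statements for a merely Lipschitz map to everywhere statements calls for careful use of Fubini, continuity, and degree theory rather than smooth-manifold tools.
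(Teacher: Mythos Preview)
The paper does not give its own proof of this theorem; it is quoted as folklore with references to Burago--Ivanov, accompanied only by a two-line sketch. That sketch (and Lemma~3.3, which carries it out in the metric setting) runs differently from your proposal: instead of Rademacher and the Jacobian, it uses that $f$ is mass preserving together with the uniform asymptotics $\Ha^n(B(\cdot,r))\sim\omega_n r^n$ in both $X$ and $M$ to show that images of disjoint small balls cannot significantly overlap, forcing $f$ to be injective and locally bi-Lipschitz. Your route via the area formula to $|J_f|=1$, $N(f,\cdot)=1$ a.e., and $Df\in O(n)$ a.e.\ is the other standard argument and is correct through those conclusions; the injectivity step is also fine once it is phrased via mass preservation (the images of $B(p,r)$ and $B(q,r)$ must overlap in positive measure inside $B(f(p),r)$, contradicting $N=1$ a.e.).

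The genuine gap is in the reverse distance inequality. Your plan is to lift a minimizing geodesic $\sigma\subset M$ through $f$ and claim the lift has the same length, but at this stage $f^{-1}$ is only continuous, so $f^{-1}\circ\sigma$ need not even be rectifiable, and there is no mechanism to place it in the ``generic length-preserving family'' that Fubini produces: that family consists of geodesics in $X$, not preimages of geodesics in $M$, and any single curve is a null set for such a decomposition. What is missing is precisely the step the paper singles out: the ball-overlap comparison yields $d(f(x),f(y))\geq c\,d(x,y)$ for nearby $x,y$ (cf.\ Lemma~3.3), so $f$ is a bi-Lipschitz homeomorphism; then $f^{-1}$ is Lipschitz, Rademacher applies to it, $D(f^{-1})=(Df)^{-1}\in O(n)$ a.e., and the same Fubini argument run for $f^{-1}$ delivers the missing inequality $d_X(p,q)\leq d_M(f(p),f(q))$.
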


For a detailed proof and applications; see \cite{Burago-2,Burago-Sergei}. This result has led to new developments and research in various areas of mathematics. For example, in Riemannian geometry the assumption on the volume has been replaced by a condition on the scalar curvature, leading to the celebrated Lipschitz rigidity result for spin manifolds of Larull \cite{Llarull}; see also \cite{Cechchini-Zeidler, Gromov-scalar-lectures}. More recently, there has been a growing interest in proving Lipschitz-volume rigidity type results in the field of analysis on metric spaces and geometric measure theory \cite{Basso-creutz-teri,basso2023geometric,Perales-DelNin,Damaris-Dimitrios,Zuest-23}. 

The proof of Theorem \ref{thme: lip-vol-rig-classic} relies on measure theoretic arguments and the differentiability of Lipschitz maps. Naively, this approach gives hope, that one could easily extend the result to manifolds with less differential structure. However, a simple counterexample shows that the result can fail even if $M$ is a geodesic Lipschitz manifold \cite{Zuest-23}. In the present work, we are interested in weakening the conditions on the domain manifold as much as possible. We prove the following result.


\begin{theorem}\label{thme: main}
    Let $n\geq 3$ and $M$ be a closed, oriented, connected Riemannian $n$-manifold. Let $X$ be a metric space homeomorphic to a closed, oriented, connected smooth $n$-manifold. Suppose that $\Ha^n(X) = \Ha^n(M)$ and porous sets in $X$ have measure zero. Then, every $1$-Lipschitz map of non-zero degree from $X$ to $M$ is an isometric homeomorphism.
\end{theorem}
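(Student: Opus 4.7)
My approach is to reduce Theorem \ref{thme: main} to a Lipschitz--volume rigidity theorem for integral currents, with degree theory providing the bridge between the topological and current-theoretic settings. The first task is to equip both $X$ and $M$ with canonical top-dimensional integral currents whose masses coincide with $\mathcal{H}^n$. On the Riemannian side this is classical: the fundamental class $\bb{M}$ is an integral $n$-current with $\mass(\bb{M})=\textup{Vol}(M)=\mathcal{H}^n(M)$. For $X$, the homeomorphism to a closed, oriented, smooth $n$-manifold together with the porosity hypothesis should allow the construction of a canonical fundamental class $\bb{X}$ as an integral $n$-current with $\mass(\bb{X})=\mathcal{H}^n(X)$; here the role of the porosity condition is to exclude the pathology in which the mass measure of $\bb{X}$ is concentrated on a proper subset, so that it coincides with $\mathcal{H}^n$ on the nose.

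With both fundamental currents in place, the nonzero degree assumption translates into $f_{\#}\bb{X}=\deg(f)\cdot\bb{M}$, and combining this with the $1$-Lipschitz property and mass monotonicity gives
\[
|\deg(f)|\,\mathcal{H}^n(M) = \mass\bigl(f_{\#}\bb{X}\bigr) \leq \mass(\bb{X}) = \mathcal{H}^n(X) = \mathcal{H}^n(M).
\]
Since $\deg(f)\neq 0$, we conclude $|\deg(f)|=1$ and every inequality above is an equality. This is exactly the hypothesis of the Lipschitz--volume rigidity results for integral currents developed in the recent works cited in the introduction; invoking such a theorem for $f$ should yield that $f$ is a metric isometry from a subset of full $\mathcal{H}^n$-measure of $X$ onto a subset of full measure of $M$.

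The last step, and the place where I expect the main technical obstacle to lie, is to promote this measure-theoretic isometry to a genuine isometric homeomorphism. Surjectivity follows from the nonzero degree. The porosity hypothesis plays a decisive role once more: it should imply that the set on which $f$ fails to be isometric not only has measure zero but in fact has empty interior, so continuity extends the distance-preserving relation to all of $X$. A separate argument, likely combining the manifold topology with the mass preservation of $f_{\#}\bb{X}$, is then needed to rule out the possibility that $f$ identifies two distinct points; such a collapse would either contradict the density of the good set or force extra mass to accumulate near the image point. The restriction $n\geq 3$ presumably propagates from the cited integral-current rigidity theorem, where low-dimensional pathologies need to be avoided.
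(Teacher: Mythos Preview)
Your overall architecture---build a fundamental cycle in $X$, push it forward, and invoke Z\"ust's rigidity theorem---is exactly the paper's strategy. But you have black-boxed precisely the step that constitutes the paper's main technical contribution, and you have misidentified where the porosity hypothesis enters.

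The construction of an integral $n$-cycle $T\in\bI_n(X)$ with $\spt T=X$ is not automatic from ``$X$ is homeomorphic to a manifold plus porosity.'' The paper builds it in several nontrivial stages: first it shows $X$ is $n$-rectifiable (Lemma~\ref{lemma: X-rect-new}), and this already uses the map $f$ together with Bate's perturbation theorem for purely unrectifiable sets---there is no a priori reason a metric manifold of finite $\Ha^n$-measure should be rectifiable. Then it produces a bi-Lipschitz atlas on which $f$ is bi-Lipschitz (Lemma~\ref{lemma: f-bi-lip-decomp-new}), assigns local densities via the sign of the Jacobian of $f\circ\varphi_i$, and finally proves $\partial T=0$ by a degree computation (Lemma~\ref{lemma: existence-cycle-new}). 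The porosity assumption is used \emph{only} in Lemma~\ref{lemma: porous-degree-lipschitz}: it guarantees that two Lipschitz maps $X\to\R^n$ agreeing on a chart $C_i$ have the same local degree at almost every point of $C_i$, which is what makes the boundary computation go through. It has nothing to do with preventing mass concentration on a proper subset, nor with promoting a measure-theoretic isometry at the end.

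Your last paragraph is also off target. Z\"ust's theorem (Theorem~\ref{thme: lip-vol-rigidity-zust}) does not merely give isometry on a full-measure set; it gives that $f|_{\spt T}$ is an isometry outright. So there is no ``promotion'' step---the only thing needed is $\spt T=X$, which is immediate from the construction since the density is $\pm1$ on each chart. Finally, the restriction $n\geq3$ does not come from the current-rigidity theorem (which holds in all dimensions); it is stated because the case $n=2$ is already known by uniformization methods, even without the porosity hypothesis.
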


We say that a Borel set $A \subset X$ is \textit{porous} if for each $x \in A$ there exist $\epsilon>0$ and a sequence $x_n \to x$ satisfying 
$$d(x_n,A) \geq \epsilon d(x_n,x)$$ 
for every $n \in \N$. By the Lebesgue density theorem, every porous set in a Riemannian manifold has measure zero. More generally, if the Hausdorff $n$-measure in $X$ is doubling or $X$ is a Lipschitz differentiability space, then porous sets in $X$ have measure zero \cite{Bate-Speight}. It follows that if there exists $c>0$ such that $\Ha^n(B(x,r)) \geq cr^n$ for all $x \in X$ and every $r \in (0,\diam X)$, then every porous set in $X$ is negligible.

Theorem \ref{thme: main} also holds under weaker assumptions on $X$. Indeed, it suffices to assume that $M$ is a Lipschitz manifold and an essential length space; see \cite{Zuest-23} for the definition. We do not know whether the condition on porous sets in $X$ is really necessary. Note that all manifolds considered in this work are assumed to be connected. 

The author, in collaboration with Basso and Wenger, already established some Lipschitz-volume rigidity type results for metric manifolds \cite[Theorem 1.4]{basso2023geometric}. In fact, for surfaces, Theorem \ref{thme: main} holds without the assumption on porous sets. This has also been proved in \cite{Damaris-Dimitrios} using a different approach. However, both proofs of Theorem \ref{thme: main} for surfaces rely on a recent uniformization theorem for metric surfaces \cite{Ntalampekos-Romney-1, Ntalampekos-Romney-2}. In the case where $n \geq 3$, no such uniformization results are available and a different approach is required. In this context, it is known from \cite{basso2023geometric} that the result holds if $X$ is linearly locally contractible. Notice that the theorem presented here is more general. Indeed, by \cite[Theorem 4.1]{basso2023geometric}, linear local contractibility implies, that there exists $c>0$ such that $\Ha^n(B(x,r)) \geq cr^n$ for all $x \in X$ and every $r \in (0,\diam X)$. 
\newline

We now explain the main steps in the proof of Theorem \ref{thme: main}. It is well-known that $f$ is mass-preserving in this context, that is, $\Ha^n(f(B)) = \Ha^n(B)$ for every Borel $B\subset X$. In the proof of Theorem \ref{thme: lip-vol-rig-classic}, one compares the volume of balls in $X$ and the volume of their images in $M$. If $X$ and $M$ are Riemannian, we have a uniform control on the volume of sufficiently small balls in both spaces simultaneously. Since $f$ is mass preserving and $1$-Lipschitz, this implies that the image of two disjoint balls cannot overlap significantly. It follows that $f$ is a homeomorphism and locally bi-Lipschitz.
In the situation of Theorem \ref{thme: main}, we first use a result of Bate, see below Theorem \ref{thme: weak-perturb-bate}, to prove that $X$ is $n$-rectifiable. Therefore, in contrast to the proof of Theorem \ref{thme: lip-vol-rig-classic}, we obtain a uniform control of the measure of balls only after a Borel decomposition of $X$. This then implies that $f$ is bi-Lipschitz only on each part of this decomposition. However, this does not suffice to conclude that $f$ is an isometry. 
To overcome this issue, we use the theory of integral currents in the sense of Ambrosio-Kirchheim \cite{ambrosio-kirchheim-2000}. The condition on porous sets guarantees that degree theory can be applied. This allows us to use a refined version of the construction in \cite{basso2023geometric} and to obtain an integral cycle in $X$. Finally, the statement then follows from the recent work of Z\"ust \cite{Zuest-23} on Lipschitz-volume rigidity for integral currents. Such an integral current should be interpreted as an analytic analog of the fundamental class of the space. It is sometimes called a metric fundamental class and is of independent interest. We refer to \cite{basso2023geometric} and \cite{Denis-Teri} for applications beyond Lipschitz volume rigidity. 
.

\section{Preliminaries}
Let $(X,d)$ be a metric space. We write $B(x,r)=\{y \in X \colon d(x,y)< r\}$ for the open ball with center $x \in X$ and radius $r> 0$. For $U \subset X$ and $r>0$, we denote by 
$N_r(U) = \left\{ y \in X \colon d(y,U) < r\right\}$
the open $r$-neighborhood of $U$. A map $f\colon X \to Y$ between metric spaces is called \textit{$L$-Lipschitz} if 
$$d(f(x),f(y)) \leq L d(x,y)$$
for all $x,y \in X$. We write $\Lip(f)$ for the smallest constant satisfying this inequality. If $f$ is injective and also its inverse is $L$-Lipschitz we say $f$ is \textit{$L$-bi-Lipschitz}.

\subsection{Degree and orientation}
We record some basic facts about the degree we use later and refer to \cite{dold-1980} for a more detailed exposition of the topic. Let $X$ and $M$ be oriented topological $n$-manifolds and assume that $X$ is closed. For a continuous map $f\colon X \to M$, we write $\deg(f)$ for the degree of $f$. The degree is a homotopy invariant, more precisely, if $g\colon X \to M$ is another continuous map that is homotopic to $f$, then $\deg(f)=\deg(g)$. Let $x \in X$. The relative singular homology groups $H_n(U,U\setminus \{x\})$ and $H_n(V,V\setminus \{f(x)\})$ are infinite cyclic for any open neighborhood $U$ and $V$ of $x$ and $f(x)$, respectively. If $f(U) \subset V$ and $f(y) \neq f(x)$ for every $y \in U\setminus \{x\}$, then $f_* \colon H_n(U,U\setminus \{x\}) \to H_n(V,V\setminus \{f(x)\})$ is a well defined homomorphism. It sends the generator of $H_n(U,U\setminus \{x\})$ to an integer multiple of the generator of $H_n(V,V\setminus \{f(x)\})$. This integer is called the \textit{local degree} of $f$ at $x$ and denoted by $\deg(f,x)$. The local degree does not depend on the choice of neighborhoods $U$ and $V$. It follows \cite[Chapter 8, Proposition 4.7]{dold-1980} that for every $y \in M$ for which the preimage $f^{-1}(y)$ is finite the local degree of $f$ at each $x \in f^{-1}(y)$ is well defined and we have
$$\deg(f) = \sum_{x \in f^{-1}(y)} \deg(f,x).$$
Furthermore, let $g\colon X \to M$ be continuous with $f(x) = g(x)$ and $U \subset X$ be a open neighborhood of $x$. If there exists a homotopy $H$ between $f$ and $g$ satisfying $H(y,t) \neq f(x)$ for all $y \in U \setminus \{x\}$ and every $t$, then $\deg(f,x) = \deg(g,x)$. Finally, the local degree satisfies the following multiplicity property; see \cite[Chapter 8, Corollary 4.6]{dold-1980}. Suppose that $M$ is closed and $h \colon M \to N$ is a continuous map into another oriented topological $n$-manifold $N$. If the local degrees $\deg(f,x)$ abd $\deg(h,f(x))$ are well defined, then
$$\deg(h\circ f, x) = \deg(h,f(x)) \cdot \deg(f,x).$$
We conclude with the following easy consequence of the homotopy invariance of the local degree. Let $M$ be a closed, oriented smooth $n$-manifold and $f\colon \R^n \to M$ be continuous. If $f$ is differentiable at $x \in X$ with non-degenerate differential $D_x f$, then $\deg(f,x)=\sgn(\det(D_x f))$.


\subsection{Rectifiability}

Let $X$ be a complete metric space. We denote by $\Ha^n$ the Hausdorff $n$-measure on $X$ and normalize it such that $\Ha^n$ equals the Lebesgue measure $\leb^n$ on $n$-dimensional Euclidean space. A $\Ha^n$-measurable set $E \subset X$ is said to be \textit{$n$-rectifiable} if there exist a countable number of Borel sets $K_i \subset \R^n$ and Lipschitz maps $\varphi_i \colon K_i \to X$ such that
\begin{equation}\label{eq: def-rect}
    \Ha^n\left( E \setminus \bigcup_i \varphi_i(K_i)\right) = 0.
\end{equation}
The following fundamental result about rectifiable sets in metric spaces goes back to Kirchheim \cite[Lemma 4]{kirchheim1994rectifiable}.

\begin{lemma}\label{lemma: lip-bilip-decomp}
    Let $K \subset \R^n$ be Borel and $\varphi \colon K \subset \R^n \to X$ Lipschitz. Then, there exist countably many Borel sets $K_i \subset K$ such that
    \begin{enumerate}
        \item $\Ha^n(\varphi(K) \setminus \bigcup_i \varphi(K_i)) = 0$;
        \item $\varphi$ is bi-Lipschitz on each $K_i$.
    \end{enumerate}
\end{lemma}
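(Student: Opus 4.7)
The plan is to use Kirchheim's metric differentiability theorem to reduce the problem to an essentially Euclidean one. Recall that for a.e.\ $x \in K$ there exists a seminorm $\md_x\varphi$ on $\R^n$ such that
$$d(\varphi(y),\varphi(x)) - \md_x\varphi(y-x) = o(|y-x|) \quad \text{as } y \to x.$$
Let $K' \subset K$ be the Borel set on which this metric differential exists. Since $\Ha^n(K\setminus K') = 0$ and $\varphi$ is Lipschitz, the image $\varphi(K\setminus K')$ has vanishing $\Ha^n$-measure, so it will suffice to decompose $K'$.

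Split $K' = D \sqcup G$ where $D$ is the Borel set of points at which $\md_x\varphi$ is a degenerate seminorm and $G$ its complement in $K'$. I would first argue that $\Ha^n(\varphi(D)) = 0$: at each $x \in D$ there is a unit vector $v$ with $\md_x\varphi(v) = 0$, so for $r$ small, $\varphi$ maps the $r$-ball around $x$ into a set of Hausdorff $n$-content at most $C(n)(\Lip\varphi)^{n-1}\cdot \epsilon r \cdot r^{n-1}$, for arbitrary $\epsilon > 0$; after further Borel decomposition according to the direction of degeneracy and the scale at which the approximation kicks in, a standard Vitali-type covering argument gives $\Ha^n(\varphi(D)) = 0$.

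It then remains to decompose $G$. Equip the cone of norms on $\R^n$ with the metric $\|\nu_1 - \nu_2\|_\infty \defl \sup_{|v|=1}|\nu_1(v) - \nu_2(v)|$, fix a countable dense family $\{\nu_j\}$ of norms, and for rationals $\epsilon, \delta > 0$ and integers $j,k$ let $G_{j,\epsilon,\delta,k}$ consist of those $x \in G$ at which $\md_x\varphi$ is a norm with $\|\md_x\varphi - \nu_j\|_\infty < \epsilon$, for which $\md_x\varphi(v) \geq \frac{1}{k}|v|$, and for which $|d(\varphi(y),\varphi(x)) - \md_x\varphi(y-x)| \leq \epsilon |y-x|$ whenever $|y-x| < \delta$. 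These are Borel sets covering $G$. Finally, partition each $G_{j,\epsilon,\delta,k}$ into countably many Borel sets of diameter less than $\delta$; on any such piece, for two points $x,y$ one has $\md_x\varphi(y-x) \geq (\frac{1}{k} - \epsilon)|y-x|$ while the error is controlled by $\epsilon|y-x|$, so (choosing $\epsilon \ll 1/k$ in the countable enumeration) $\varphi$ is bi-Lipschitz on each piece with constants depending only on $\nu_j$, $\epsilon$, $k$. The collection of all these pieces gives the required $\{K_i\}$.

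The most delicate step is verifying $\Ha^n(\varphi(D)) = 0$; getting the right covering and the correct bound on the $n$-content of $\varphi$ on small balls around degenerate points is the real content. Everything else is a measurable selection/Egorov-type book-keeping exercise exploiting Kirchheim's differentiability theorem, plus the observation that restricting to sets of diameter smaller than the differentiability scale converts a pointwise bi-Lipschitz lower bound into a genuine bi-Lipschitz estimate.
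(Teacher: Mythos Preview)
The paper does not supply its own proof of this lemma; it simply cites \cite[Lemma~4]{kirchheim1994rectifiable}. Your plan \emph{is} Kirchheim's argument---metric differentiability, $\Ha^n$-null image of the degenerate locus via a direction/scale decomposition and covering, then stratification of the non-degenerate locus by approximate norm and differentiability scale---so the approaches coincide. One small wording issue: the $n$-content bound you assert for $\varphi(B(x,r))$ does not follow from $\md_x\varphi(v)=0$ alone, since the differential at $x$ controls $d(\varphi(y),\varphi(x))$ but not $d(\varphi(y_1),\varphi(y_2))$; the correct estimate is for $\varphi(B(x,r)\cap D_{v,\epsilon,\delta})$ once both endpoints lie in a common piece of the direction/scale decomposition---which is exactly why that decomposition has to precede the covering, as you go on to say.
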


It follows that for any $n$-rectifiable set $E \subset X$, there exists a countable collection of bi-Lipschitz maps $\varphi_i \colon K_i \subset \R^n \to X$ satisfying \eqref{eq: def-rect} and such that the $\varphi_i(K_i)$ are pairwise disjoint, compare to \cite[Lemma 4.1]{ambrosio-kirchheim-2000}. We call such a collection $\big\{\varphi_i,K_i\big\}$ an \textit{atlas} of $E$.
Furthermore, we say that a $\Ha^n$-measurable set $P \subset X$ is \textit{purely $n$-unrectifiable} if $\Ha^n(P \cap E)=0$ for every $n$-rectifiable set $E \subset X$. We need the following deep result due to Bate. We denote the Euclidean norm by $\norm{\cdot}$.

\begin{theorem}\label{thme: weak-perturb-bate}
    Let $X$ be a complete metric space with finite Hausdorff $n$-measure and $f\colon X \to \R^n$ Lipschitz. Let $P \subset X$ be purely $n$-unrectifiable. Then, for any $\epsilon>0$ there exists a $(\Lip(f)+\epsilon)$-Lipschitz map $g \colon X \to \R^n$ satisfying
    \begin{enumerate}
        \item$ \norm{f(x)-g(x)}\leq \epsilon$ for each $x \in X$ and $f(x)=g(x)$ whenever $d(x,P)\geq \epsilon $;
        \item $\Ha^n(g(P)) \leq \epsilon.$
    \end{enumerate}
\end{theorem}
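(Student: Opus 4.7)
The plan is to construct $g$ by perturbing $f$ in a small neighborhood of $P$ so that the image $g(P)$ is essentially collapsed while the Lipschitz constant grows by at most $\epsilon$. The purely $n$-unrectifiable hypothesis is precisely what allows such a collapse: $P$ cannot carry a genuine $n$-dimensional Lipschitz structure, so after a small adjustment the image can be squeezed onto a set of arbitrarily small $\leb^n$-measure.

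First I would reduce to a local statement. By inner regularity and finiteness of $\Ha^n(X)$, one may assume $P$ is compact, and cover $P$ by finitely many Borel pieces $P_j \subset B(x_j,r_j)$ with $r_j \ll \epsilon$ and uniformly bounded overlap multiplicity. The oscillation of $f$ on each piece is then at most $\Lip(f)\, r_j$. It suffices to prove the following local claim: for each $j$ there exists a Lipschitz $g_j \colon X \to \R^n$ with $g_j = f$ outside $B(x_j,2r_j)$, with $\norm{f(x) - g_j(x)} \leq \epsilon$ for all $x$, $\Lip(g_j) \leq \Lip(f) + \epsilon/N$, and $\Ha^n(g_j(P_j)) \leq \epsilon/N$, where $N$ is the number of pieces. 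Assembling these additively with respect to $f$, using a Lipschitz partition of unity subordinate to $\{B(x_j,2r_j)\}$, then yields $g$ satisfying (1) and (2), and the union bound produces the required measure estimate $\Ha^n(g(P)) \leq \epsilon$.

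The main obstacle is the local construction. The natural strategy is coordinate-by-coordinate: perturb the components $f_1,\ldots,f_n$ one at a time, each time shrinking the image further in a new direction. The hypothesis enters through Bate's characterization of rectifiability via Alberti representations: $P$ is purely $n$-unrectifiable if and only if no Lipschitz map $X \to \R^n$ admits $n$ independent Alberti representations on $P$. Concretely, for generic directions $v \in S^{n-1}$, the slices $\langle v, f \rangle^{-1}(t) \cap P_j$ are $\Ha^{n-1}$-negligible. A stopping-time/Venetian-blind perturbation in the direction $v$ can then collapse $f(P_j)$ onto countably many parallel hyperplanes, and iterating over $n$ independent directions reduces the $\leb^n$-measure of the image below the prescribed threshold. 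The delicate point throughout is Lipschitz bookkeeping: each local perturbation must contribute at most $\epsilon/N$ to the global Lipschitz increase, which forces the amplitudes to be very small and demands a genuinely quantitative (not merely qualitative) form of the purely unrectifiable hypothesis; destructive interaction between perturbations on overlapping balls is controlled by the bounded multiplicity of the cover.
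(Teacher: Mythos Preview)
The paper does not prove this theorem at all: its proof reads, in full, ``The theorem is a direct consequence of Theorem 4.9, Theorem 2.21 and Observation 4.2 in \cite{bate-2020} together with \cite[Theorem 1.5]{bate24}.'' In other words, the statement is quoted from Bate's work and used as a black box. What you have written is an attempt to sketch an independent proof of Bate's perturbation theorem itself, which is a substantial result (the Acta paper \cite{bate-2020} is long, and the refinement in \cite{bate24} is needed to remove an extra hypothesis on lower densities).

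Your outline has the right shape---localize, perturb coordinate by coordinate, exploit the absence of Alberti representations---but several steps are genuine gaps rather than routine details. First, the patching step: combining local perturbations $g_j$ via a Lipschitz partition of unity does \emph{not} in general produce a map with Lipschitz constant close to $\Lip(f)$; the cross terms $\varphi_j(x)(g_j(x)-g_j(y))$ and $(\varphi_j(x)-\varphi_j(y))g_j(y)$ interact, and bounded overlap alone does not kill the second contribution. Bate's construction avoids this by working globally from the start rather than gluing local pieces. Second, the slice assertion ``for generic $v$ the slices $\langle v,f\rangle^{-1}(t)\cap P_j$ are $\Ha^{n-1}$-negligible'' is neither what pure unrectifiability gives you directly nor what Bate uses; his argument runs through the structure of decompositions of $\Ha^n\mres P$ into curve fragments, not through coarea-type slicing of $f$. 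Third, you yourself flag that the Lipschitz bookkeeping ``demands a genuinely quantitative form of the purely unrectifiable hypothesis''---obtaining exactly that quantitative form is the content of Bate's theorem, so at this point the sketch is circular. For the purposes of this paper the correct move is simply to cite \cite{bate-2020} and \cite{bate24}, as the author does.
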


\begin{proof}
    The theorem is a direct consequence  of Theorem 4.9, Theorem 2.21 and Observation 4.2 in \cite{bate-2020} together with \cite[Theorem 1.5]{bate24}.
\end{proof}

\subsection{Metric currents}
Let $X$ be a complete metric space. We use the theory of metric currents developed by Ambrosio and Kirchheim \cite{ambrosio-kirchheim-2000}; see also \cite{lang-local}. For $k\geq 0$, we denote by $\mathcal{D}^k(X)$ the set of all $(k+1)$-tuples $(f,\pi_1,\dots,\pi_k)$ of Lipschitz functions \(f\colon X\to \R\) and \(\pi_i\colon X\to \R\), for \(i=1, \dots, k\), with \(f\) bounded. A metric $k$-current $T \in \bM_k(X)$ is a multilinear functional $T\colon \mathcal{D}^k(X) \to \R$ which satisfies three properties that are usually referred to as the continuity property, the locality property and the finite mass property. The last property guarantees the existence of a finite Borel measure $\norm{T}$ associated with $T$, called the mass measure of $T$. The \textit{support} of \(T\) is defined as
\[
\spt T=\big\{ x\in X : \norm{T}(B(x, r)) >0 \text{ for all } r>0\big\}.
\]
Let $\varphi\colon X \to Y$ be a Lipschitz map and $T \in \bM_k(X)$. The \textit{pushforward} of $T$ under $\varphi$ is the metric $k$-current in $Y$ defined as
$$\varphi_\# T(f,\pi_1,\dots,\pi_k) = T(f\circ \varphi,\pi_1\circ \varphi,\dots,\pi_k\circ\varphi)$$
for all $(f,\pi_1,\dots, \pi_{k})\in \mathcal{D}^k(Y)$. An important example of a $k$-current is given by the integration with respect to some $\theta \in L^1(\R^k)$. More specifically, each $\theta \in L^1(\R^k)$ induces a $k$-current $\bb{\theta} \in \bM_k(\R^k)$ defined as follows
$$\bb{\theta}(f,\pi_1,\dots,\pi_k) = \int_{\R^k} \theta f \det(D \pi) \; d\leb^k$$
for all $(f,\pi) = (f,\pi_1,\dots,\pi_k) \in \mathcal{D}^k(\R^k).$ Motivated by the definition of a rectifiable set in a metric space, a $k$-current $T \in \bM_k(X)$ is said to be \textit{integer rectifiable} if the following holds. There exist countably many compact sets $K_i \subset \R^k$, and $\theta_i \in L^1(\R^k,\Z)$ with $\spt \theta_i \subset K_i$ and bi-Lipschitz maps $\varphi_i \colon K_i \to X$ such that
\[
T=\sum_{i\in \N} \varphi_{i\#}\bb{\theta_i} \quad \text{ and } \quad \mass(T)=\sum_{i\in \N} \mass(\varphi_{i\#}\bb{\theta_i}).
\]
Any such collection $\{\varphi_i,K_i,\theta_i\}$ is called a \textit{parametrization} of $T$. Given a $k$-current $T$, the \textit{boundary} of $T$ is defined by
$$\partial T (f,\pi_1,\dots,\pi_{k-1}) = T(1,f,\pi_1,\dots,\pi_{k-1})$$ 
for all $(f,\pi_1,\dots, \pi_{k-1})\in \mathcal{D}^{k-1}(X)$. We say an integer rectifiable $k$-current is an integral current if $\partial T$ is a metric $(k-1)$-current as well. The space of all integral $k$-currents is denoted by $\bI_k(X)$. Finally, an integral current without boundary is called an integral cycle.

Let $M$ be a closed, oriented Riemannian $n$-manifold. There exists an integral $n$-cycle $\bb{M}$ naturally associated to $M$. This cycle is called the fundamental class of $M$ and is defined as follows \[
\bb{M}(f,\pi) = \int_Mf\det(D\pi)\,d\hspace{-0.14em}\Ha^n
\]
for all \((f, \pi)\in \mathcal{D}^n(M)\). It is a direct consequence of Stokes theorem, that this defines an integral current without boundary. We conclude this section with the recent Lipschitz-volume rigidity result of Z\"ust for integral currents. We state a simpler version more suitable to our context.

\begin{theorem}\label{thme: lip-vol-rigidity-zust}{\normalfont(\cite[Theorem 1.2]{Zuest-23})}
    Let $M$ be a closed, oriented Riemannian $n$-manifold. Let $X$ be a complete metric space and $T \in \bI_n(X)$ an integral cycle with $\mass^{\rm b}(T) \leq \Ha^n(M)$. Then, every $1$-Lipschitz map $f\colon \spt T \to M$ satisfying $f_\# T = \bb{M}$ is an isometry.
\end{theorem}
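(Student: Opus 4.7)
The plan is to force all mass inequalities in sight to collapse into equalities, extract pointwise metric rigidity from a bi-Lipschitz parametrization of $T$, and upgrade the resulting infinitesimal isometry to a global one using the Riemannian (hence length-space) structure of $M$.

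First, since $f$ is $1$-Lipschitz, pushforward is mass non-increasing, so $\mass(f_\# T) \le \mass(T)$. Combined with $f_\# T = \bb{M}$, which gives $\mass(f_\# T) = \Ha^n(M)$, and with the hypothesis $\mass^{\rm b}(T) \le \Ha^n(M)$, we obtain the chain $\Ha^n(M) \le \mass(T) \le \mass^{\rm b}(T) \le \Ha^n(M)$, forcing all equalities. Fix a parametrization $\{\varphi_i, K_i, \theta_i\}$ of $T$ by bi-Lipschitz maps with integer multiplicities. By the area formula for rectifiable currents,
\[
\mass(T) = \sum_i \int_{K_i} |\theta_i(x)|\,\J_n(\md \varphi_i(x))\,d\leb^n(x),
\]
with an analogous expression for $\mass(f_\# T)$ using $f\circ\varphi_i$ but allowing signed cancellations between charts. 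Saturation of all the inequalities forces, $\leb^n$-a.e.\ on each $K_i$, the identity $|\theta_i|=1$, the seminorm identity $\md(f\circ\varphi_i)=\md\varphi_i$, and essential disjointness of the pushed pieces $f(\varphi_i(K_i))$ in $M$: any overlap would require signed cancellations incompatible with $\bb{M}$ carrying multiplicity $+1$ almost everywhere.

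These chartwise identities show that $f$ is essentially injective on $\spt T$ and infinitesimally isometric on a Borel set $E\subset\spt T$ of full $\Ha^n$-measure, with $f|_E$ a bi-Lipschitz bijection onto a full-measure subset of $M$. To upgrade this to a genuine isometry, use that $M$ is a length space. For $x,y\in E$, pick a curve in $M$ from $f(x)$ to $f(y)$ of length arbitrarily close to $d_M(f(x),f(y))$. Using the rectifiable charts together with the chartwise identity $\md(f\circ\varphi_i)=\md\varphi_i$, this curve lifts through $f^{-1}|_{f(E)}$ to an absolutely continuous curve in $\spt T$ of the same length, yielding $d_X(x,y)\le d_M(f(x),f(y))+\epsilon$ for every $\epsilon>0$. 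The reverse inequality is immediate from $\Lip(f)\le1$, and density of $E$ in $\spt T$ extends the isometry to all of $\spt T$.

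The main obstacle is the lifting step: for a general complete metric space $X$ there is no a priori reason a rectifiable curve in $M$ should admit a lift through $f^{-1}$, and without such a lift one cannot recover distances on the domain side from those in the target. The integer rectifiable structure of $T$ is precisely what supplies measurable charts whose metric differentials pin down the length of lifts, while the matching of $f_\# T$ with $\bb{M}$ at the cycle level (not merely up to sign) rules out folding. A secondary delicate point is that $\mass^{\rm b}$ is the set-theoretic variant of mass in Z\"ust's framework; verifying $\mass(T)\le\mass^{\rm b}(T)$ to close the inequality chain above relies on the standard structure theorem for integer rectifiable currents.
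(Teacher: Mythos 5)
The paper does not give a proof of this statement: it is Theorem~1.2 of Z\"ust \cite{Zuest-23}, quoted (in slightly simplified form) as an external input. So there is no internal argument to compare against, and I can only assess your sketch on its own terms.

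Your overall plan --- saturate a chain of mass inequalities, extract a.e.\ metric rigidity from a bi-Lipschitz parametrization, then upgrade to a global isometry via the length structure of $M$ --- is a sensible outline, but there are two issues, one minor and one fatal to the sketch as written.

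First, the inequality chain mixes up the two notions of mass in a way that is not justified. You assert $\mass(T)\le\mass^{\rm b}(T)$ as ``standard,'' but the Ambrosio--Kirchheim mass of an integer rectifiable current equals $|\theta|\,\lambda\,\Ha^n$ on the carrying set, where the area factor $\lambda$ is only known to lie in $[\,n^{-n/2},\,2^n/\omega_n\,]$; it is not bounded by $1$ in general, so $\mass$ and $\mass^{\rm b}$ are not comparable in either direction. The clean fix is to stay within the Busemann mass: since $M$ is Riemannian, $\mass^{\rm b}(\bb{M})=\Ha^n(M)$, and $\mass^{\rm b}$ is non-increasing under $1$-Lipschitz pushforward (the Jacobian satisfies $\mathbf{J}(\md(f\circ\varphi_i))\le\mathbf{J}(\md\varphi_i)$, while cancellations in the image can only decrease the effective multiplicity). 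The chain should therefore read $\Ha^n(M)=\mass^{\rm b}(f_\#T)\le\mass^{\rm b}(T)\le\Ha^n(M)$.

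Second --- and this is the genuine gap --- the ``lifting step'' that you single out as the main obstacle is in fact the entire content of the theorem, and your sketch does not supply it. The full-measure Borel set $E\subset\spt T$ on which $f$ is injective with $\md(f\circ\varphi_i)=\md\varphi_i$ carries only $\Ha^n$-a.e.\ information; a near-geodesic $\gamma$ in $M$ need not lie in $f(E)$, and even on $\gamma\cap f(E)$ the composition $f^{-1}|_{f(E)}\circ\gamma$ need not be continuous, let alone absolutely continuous of controlled length --- a measurable bijection between full-measure sets has no a priori regularity, and pointwise a.e.\ control of an $n$-dimensional metric differential tells you nothing about a fixed $1$-dimensional curve. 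Asserting that the rectifiable charts ``pin down the length of lifts'' does not resolve this. Z\"ust's actual argument needs genuinely more: in particular it exploits that $M$ (more generally, a Lipschitz manifold) is an \emph{essential length space}, so that distances in the target can be realized by curves avoiding any prescribed null set, and it uses the full cycle structure of $f_\#T=\bb{M}$ rather than a single lifted curve. Without an input of that kind, the inequality $d_X(x,y)\le d_M(f(x),f(y))$ does not follow from what you have established.
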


Given an integer rectifiable current $T$ and a parametrization $\{\varphi_i,K_i,\theta_i\}$, we define the Hausdorff or Busemann mass $\mass^{\rm b}(T)$ of $T$ by

$$
\mass^{\rm b}(T) =\sum_{i\in\N}\int_{K_i}|\theta_i| \,\mathbf{J}(\md\varphi_i)\,d\mathscr{L}^n,
$$ 

Here, $\md\varphi_i$ denotes the metric differential of $\varphi_i$. It follows from the area formula for Lipschitz maps \cite{kirchheim1994rectifiable} that an integer rectifiable current $T$ and its Hausdorff or Busemann mass do not depend on the chosen parametrization. We refer to \cite{zust2021riemannian} and \cite{Zuest-23} for more information.

\section{Proof of the main theorem}
Throughout this section, let $M$ be a closed, oriented Riemannian $n$-manifold and $X$ a metric space homeomorphic to a closed, oriented smooth $n$-manifold. Suppose that $\Ha^n(X)=\Ha^n(M)$ and that there exists a $1$-Lipschitz map $f\colon X \to M$ of non-zero degree.

\medskip We first establish some properties of $X$ and $f$ that hold without the additional assumption that porous sets in $X$ have measure zero. The first lemma states that $f$ is mass-preserving and almost injective in the measure-theoretic sense. This is a standard fact. Nevertheless, we give the simple proof because we use it frequently.

\begin{lemma}\label{lemma: mass-preserving-almost-injective}
    For every Borel set $B \subset X$ we have $\Ha^n(f(B)) = \Ha^n(B)$. Moreover, for almost all $y \in M$ the preimage $f^{-1}(y)$ consists of a single point.
\end{lemma}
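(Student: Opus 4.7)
The plan is to first establish the mass-preserving property from the $1$-Lipschitz assumption together with surjectivity of $f$ (which follows from the non-zero degree hypothesis), and then derive the almost everywhere injectivity from a partitioning argument using finer and finer Borel decompositions of $X$.

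For the first claim I would combine the $1$-Lipschitz estimate $\Ha^n(f(B)) \leq \Ha^n(B)$ with the fact that $f(X) = M$ (every continuous map of non-zero degree between closed oriented $n$-manifolds is surjective) to obtain the chain
$$\Ha^n(M) = \Ha^n(f(X)) \leq \Ha^n(f(B)) + \Ha^n(f(X\setminus B)) \leq \Ha^n(B) + \Ha^n(X\setminus B) = \Ha^n(X).$$
Since the two ends are equal by hypothesis, every inequality in the chain must be an equality, which forces $\Ha^n(f(B)) = \Ha^n(B)$ for every Borel $B \subset X$.

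For the second statement, I would first observe that if $A_1, A_2 \subset X$ are disjoint Borel sets with union $B$, then mass preservation applied to $A_1$, $A_2$ and $B$ together with inclusion-exclusion forces $\Ha^n(f(A_1) \cap f(A_2)) = 0$. Since $X$ is compact, for each $k \in \N$ I can pick a finite Borel partition $\{B_i^k\}_i$ of $X$ with $\diam(B_i^k) < 1/k$. Setting $E_k = \bigcup_{i \neq j} f(B_i^k) \cap f(B_j^k)$ and $E = \bigcup_k E_k$, both sets are $\Ha^n$-null. For any $y \in M \setminus E$, two distinct preimages $x \neq x'$ would, for $k$ large enough that $1/k < d(x,x')$, land in two different pieces of the partition $\{B_i^k\}_i$, forcing $y \in E_k \subset E$, a contradiction. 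Surjectivity of $f$ then guarantees exactly one preimage.

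I do not anticipate any real obstacle here; the whole argument rests on the tightness of the $1$-Lipschitz inequality forced by $\Ha^n(X) = \Ha^n(M)$ together with surjectivity inherited from the degree hypothesis. The only minor point to verify is the existence of finite Borel partitions of $X$ into sets of arbitrarily small diameter, which is immediate from compactness and separability of $X$.
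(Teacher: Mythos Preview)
Your proof of the mass-preserving property is essentially the paper's argument. For the almost-everywhere injectivity, however, you take a different route: the paper invokes the coarea inequality \cite[2.10.25]{federer-gmt}, writing
\[
\Ha^n(X) = \Ha^n(f(X)) \leq \int_M \Ha^0(f^{-1}(y)) \, d\Ha^n(y) \leq \Ha^n(X),
\]
and reads off $\Ha^0(f^{-1}(y)) = 1$ almost everywhere in one line. Your partitioning argument is more elementary and self-contained, trading that citation for an explicit inclusion--exclusion computation on successively finer Borel partitions; it makes the underlying mechanism---that images of disjoint pieces can overlap only on null sets---very transparent. One minor point worth recording: the inclusion--exclusion step needs the images $f(A_i)$ to be $\Ha^n$-measurable, which holds because Lipschitz images of Borel sets in Polish spaces are analytic and hence universally measurable (the paper's argument relies on the same fact implicitly when it writes $\Ha^n(M)=\Ha^n(M\setminus f(B))+\Ha^n(f(B))$).
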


\begin{proof}
    The map has non-zero degree and is therefore surjective. This, together with the fact that $f$ is $1$-Lipschitz, implies that $f$ is mass preserving. Indeed, for $B \subset X$ Borel we have
    $$\Ha^n(M) = \Ha^n(M\setminus f(B)) +\Ha^n(f(B)) \leq \Ha^n(X\setminus B)+\Ha^n(B) = \Ha^n(X).$$
    Since $\Ha^n(X)=\Ha^n(M)$, each inequality becomes an equality. Therefore, for every Borel set $B\subset X$ we have $\Ha^n(f(B)) = \Ha^n(B)$. Furthermore, it follows from the coarea inequality \cite[2.10.25]{federer-gmt} that
    $$\Ha^n(B) = \Ha^n(f(B)) \leq \int_M \Ha^0(f^{-1}(y) \cap B) \; dy \leq \Ha^n(B)$$
    for every Borel $B \subset X$. We conclude that for almost all $y\in M$ the preimage $f^{-1}(y)$ consists of a single point. 
\end{proof}

Using Theorem \ref{thme: weak-perturb-bate} above we now prove:

\begin{lemma}\label{lemma: X-rect-new}
    The space $X$ is $n$-rectifiable.
\end{lemma}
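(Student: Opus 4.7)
The plan is to argue by contradiction. Decompose $X$ into a rectifiable part $E$ and a purely $n$-unrectifiable part $P$, and assume $\Ha^n(P) > 0$. Using Theorem \ref{thme: weak-perturb-bate}, I will construct a continuous perturbation $g \colon X \to M$ of $f$ that is homotopic to $f$ and whose image has strictly less $\Ha^n$-measure than $M$. Since homotopy preserves degree, $g$ has $\deg(g) \neq 0$ and is therefore surjective, forcing $\Ha^n(g(X)) \geq \Ha^n(M)$, a contradiction.

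To build $g$, I localize: pick $x_0 \in P$ and $r > 0$ small enough that $\Ha^n(P \cap B(x_0, r)) > 0$ (possible by separability) and that $f(\overline{B}(x_0, 2r))$ lies in a $(1+\delta)$-bi-Lipschitz chart $\psi \colon V \to U \subset \R^n$ of $M$, where $\delta > 0$ is chosen as small as needed. Setting $\tilde f = \psi \circ f$ on $\overline{B}(x_0, 2r)$ (complete and of finite $\Ha^n$-measure) and $P' = P \cap B(x_0, r)$ (purely $n$-unrectifiable as a subset of $P$), I apply Theorem \ref{thme: weak-perturb-bate} with parameter $\epsilon > 0$ to obtain $\tilde g$ that is $(\Lip(\tilde f)+\epsilon)$-Lipschitz, agrees with $\tilde f$ on $\{d(\cdot, P') \geq \epsilon\}$, and satisfies $\Ha^n(\tilde g(P')) \leq \epsilon$. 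Choosing $\epsilon < r$ and smaller than the distance from the compact set $\tilde f(\overline{B}(x_0, 2r))$ to $\R^n \setminus U$, the map $\tilde g$ lands in $U$ and coincides with $\tilde f$ on $\partial B(x_0, 2r)$. Define $g = \psi^{-1} \circ \tilde g$ on $\overline{B}(x_0, 2r)$ and $g = f$ elsewhere; the boundary match makes $g$ continuous. The straight-line homotopy $H_t(x) = \psi^{-1}\bigl((1-t)\tilde f(x) + t \tilde g(x)\bigr)$ stays in $V$ and is constant $= f$ on $\partial B(x_0, 2r)$, so extending by $f$ yields a global homotopy $f \simeq g$, hence $\deg(g) = \deg(f) \neq 0$ and $g$ is surjective.

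For the volume contradiction, surjectivity of $g$ combined with Lemma \ref{lemma: mass-preserving-almost-injective} applied to $f$ on $X \setminus B(x_0, 2r)$ gives
\[
\Ha^n(M) \leq \Ha^n(X \setminus B(x_0, 2r)) + \Ha^n(g(\overline{B}(x_0, 2r))).
\]
Splitting $\overline{B}(x_0, 2r)$ into $P'$ and its complement, and using that $g|_{\overline{B}(x_0, 2r)}$ is $L$-Lipschitz with $L \to 1$ as $\delta, \epsilon \to 0$ together with $\Ha^n(\tilde g(P')) \leq \epsilon$, the second term is at most $L^n(\Ha^n(B(x_0, 2r)) - \Ha^n(P')) + (1+\delta)^n \epsilon$. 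Substituting and using $\Ha^n(X) = \Ha^n(M)$ rearranges to
\[
L^n \Ha^n(P') \leq (L^n - 1) \Ha^n(X) + (1+\delta)^n \epsilon.
\]
The main obstacle is calibrating the two small parameters: one must first shrink the chart so that $(L^n - 1)\Ha^n(X)$ falls below $\Ha^n(P')$, and only then pick $\epsilon$ small enough that the remaining term is negligible. This yields a strict inequality inconsistent with $\Ha^n(P') > 0$, completing the contradiction.
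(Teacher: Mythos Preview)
Your overall strategy---perturb $f$ via Bate's theorem, keep the degree, and derive a volume contradiction from surjectivity---is sound and close in spirit to the paper's proof. But the final ``calibration'' step has a genuine circular dependency. You propose to shrink the chart (send $\delta\to 0$) until $(L^n-1)\Ha^n(X)<\Ha^n(P')$; however, shrinking the chart forces $r$ to shrink (you need $f(\overline{B}(x_0,2r))\subset V$), and therefore $\Ha^n(P')=\Ha^n(P\cap B(x_0,r))$ shrinks as well. There is no a priori density bound on $P$ in $X$, so you cannot guarantee that $\Ha^n(P')$ stays ahead of $(L^n-1)\Ha^n(X)$ as both tend to $0$.

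The fix is to sharpen your volume estimate using the one piece of information you discarded: $g$ equals $f$ (hence is $1$-Lipschitz) outside $N_\epsilon(P')$. Split $\overline{B}(x_0,2r)$ into $P'$, $N_\epsilon(P')\setminus P'$, and the rest; apply the $L$-Lipschitz bound only on the second piece. The same surjectivity argument then gives
\[
\Ha^n(P')\;\leq\;(1+\delta)^n\epsilon\;+\;(L^n-1)\,\Ha^n\bigl(N_\epsilon(P')\setminus P'\bigr).
\]
Now fix $\delta$ and $r$ once and for all (so $P'$ and the bound on $L$ are fixed) and send $\epsilon\to 0$; since $P'$ may be taken compact, $\Ha^n(N_\epsilon(P')\setminus P')\to 0$ and you obtain $\Ha^n(P')=0$. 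With this correction the chart need not even be $(1+\delta)$-bi-Lipschitz; any fixed bi-Lipschitz chart suffices. This is essentially the paper's estimate (the paper embeds $M$ globally into $\R^N$ rather than localising, and reaches the same inequality via the inclusion $g(P)\subset h(N_\epsilon(P))$ coming from almost-injectivity), so once repaired your argument and the paper's coincide up to the choice of local chart versus global embedding.
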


\begin{proof}
    The Riemannian manifold $M$ is closed. Therefore, there exist a bi-Lipschitz embedding $\iota\colon M \xhookrightarrow{} \R^N$ and a Lipschitz retraction $\pi \colon N_\delta(\iota(M)) \to \iota(M)$ for some $N\geq n$ and $\delta>0$. See e.g. \cite[Theorem 3.10]{Heinonen-geom-embeddings} and \cite[Theorem 3.1]{hohti-1993}. Set $g= \iota \circ f$. Let $h\colon X \to \iota(M)$ be continuous and $H$ be the straight-line homotopy between $g$ and $h$. If $d(g,h)< \delta$, then the composition of $H$ and $\pi$ is well-defined. In particular, $g$ and $h$ are homotopic as maps from $X$ to $\iota(M)$. We conclude that any such $h$ is surjective.
    Now, let $\epsilon>0$ and $P \subset X$ be purely $n$-unrectifiable. It follows from Theorem \ref{thme: weak-perturb-bate} that there exists $\eta \colon X \to \R^N$ Lipschitz such that $h =  \pi \circ \eta$ is surjective and satisfies
    \begin{enumerate}
        \item $h$ is $(2 \Lip(\iota) \Lip(\pi))$-Lipschitz;
        \item $\norm{g(x)-h(x)}\leq \epsilon$ for each $x \in X$ and $g(x)=h(x)$ whenever $d(x,P)\geq \epsilon$;
        \item $\Ha^n(h(P)) \leq \epsilon$.
    \end{enumerate}
    By Lemma \ref{lemma: mass-preserving-almost-injective} the preimage $g^{-1}(y) = (\iota \circ f)^{-1}(y)$ consists of a single point for almost all $y \in M$. Therefore, by removing a set of measure zero from $P$ if necessary and by the Borel regularity of $\Ha^n$, we may suppose that $g^{-1}(g(P)) = P$ and that $P$ is compact. It follows from (2) and $h$ being surjective that $g(P) \subset h(N_\epsilon(P))$. Thus,
    $$\Ha^n(g(P)) \leq \Ha^n(h(N_\epsilon(P))) \leq \epsilon + \Lip(h)^n \cdot\Ha^n(N_\epsilon(P) \setminus P).$$
    Furthermore, using that $f$ is mass preserving and $\iota$ is bi-Lipschitz, we get
    $$\Ha^n(P) = \Ha^n(f(P)) \leq \Lip(\iota^{-1})^n \cdot\Ha^n(g(P)).$$
    Since $\epsilon>0$ was arbitrary, this shows $\Ha^n(P)=0$ and completes the proof. 
\end{proof}

We need the following fundamental property of rectifiable sets in metric spaces. By Kirchheim \cite{kirchheim1994rectifiable} the \textit{$n$-dimensional density }
$$\lim_{r \to 0}\frac{\Ha^n(B(x,r))}{\omega_n r^n} $$
exists and is equal to $1$ for almost all $x \in X$. Here, $\omega_n$ denotes the volume of the Euclidean unit $n$-ball.

\begin{lemma}\label{lemma: f-bi-lip-decomp-new}
    There exist countably many compact sets $C_i \subset X$ with the following properties:
    \begin{enumerate}
        \item $\Ha^n(X \setminus \bigcup_i C_i) = 0$;
        \item for every $i \in \N$ and all $x,y\in C_i$ we have
            $$ \frac{1}{2}  d(x,y) \leq d(f(x),f(y)) \leq d(x,y).$$
    \end{enumerate}
\end{lemma}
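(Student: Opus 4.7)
The plan is to use the rectifiability of $X$ established in Lemma~\ref{lemma: X-rect-new} to reduce the problem to a pointwise analysis of $f$ in bi-Lipschitz charts, and then to apply a Lusin--Egorov decomposition. Concretely, I would fix an atlas $\{(\varphi_i, K_i)\}$ of $X$ with $\varphi_i \colon K_i \subset \R^n \to X$ bi-Lipschitz, $K_i$ compact, and images pairwise disjoint of full measure. Setting $\psi_i := f \circ \varphi_i$ and removing the null set $\psi_i^{-1}(N)$, where $N \subset M$ is the set from Lemma~\ref{lemma: mass-preserving-almost-injective} on which $f^{-1}$ is not single-valued, I may assume $\psi_i$ is injective on each $K_i$.

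The heart of the argument is a pointwise rigidity statement. At almost every $x_0 \in K_i$ the map $\varphi_i$ is metrically differentiable with a norm $s := \md_{x_0}\varphi_i$ on $\R^n$, and in a Riemannian chart $\chi$ around $\psi_i(x_0)$ (which I may take to be $(1+o(1))$-bi-Lipschitz onto its image), the composition $\chi \circ \psi_i$ is classically differentiable with derivative $L \in \R^{n\times n}$ by Rademacher; I also arrange that $x_0$ is a Lebesgue density point of $K_i$. The 1-Lipschitz condition on $f$, combined with the near-isometry of $\chi$, gives $\norm{Lv}_e \leq s(v)$ for all $v$, so $L(B_s) \subset B_e$ and hence $|\det L| \leq \J(s) = \omega_n/\leb^n(B_s)$. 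Conversely, mass-preservation (Lemma~\ref{lemma: mass-preserving-almost-injective}) together with the metric area formula for $\varphi_i$ and the Euclidean area formula for the injective map $\chi \circ \psi_i$ force $|\det L| = \J(s)$. Equality in the volume comparison combined with $L(B_s) \subset B_e$ then yields $L(B_s) = B_e$, so $L$ is a linear isometry from $(\R^n, s)$ onto $(\R^n, \norm{\cdot}_e)$; in particular $s(v) = \norm{Lv}_e$ for all $v$.

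Given this rigidity, a standard Lusin--Egorov-type argument produces, for any fixed $\epsilon > 0$, countably many compact sets $\widetilde C_{i,k} \subset K_i$ with $\Ha^n(K_i \setminus \bigcup_k \widetilde C_{i,k}) = 0$ on which both $\varphi_i$ and $\psi_i$ are uniformly $(1\pm\epsilon)$-close to their first-order approximations at a chosen base point---$\varphi_i$ relative to a norm $s_k$ and $\psi_i$ relative to a linear map $L_k$ satisfying $s_k(v) = \norm{L_k v}_e$. On each $\varphi_i(\widetilde C_{i,k})$, writing $x = \varphi_i(u)$ and $y = \varphi_i(v)$, we obtain
$$d_M(f(x), f(y)) \geq (1-\epsilon)\,\norm{L_k(v-u)}_e = (1-\epsilon)\, s_k(v-u) \geq \tfrac{1-\epsilon}{1+\epsilon}\, d_X(x,y).$$
Choosing $\epsilon = 1/3$ gives the lower bound $d(f(x), f(y)) \geq \tfrac{1}{2} d(x,y)$, and the upper bound is automatic from $f$ being 1-Lipschitz. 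Enumerating the sets $\varphi_i(\widetilde C_{i,k})$ over $i$ and $k$ produces the required collection $\{C_j\}$.

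The main obstacle is the pointwise rigidity in the second step, where the metric area formula for $\varphi_i$, the Euclidean area formula for $\chi \circ \psi_i$, and mass-preservation must be combined with the 1-Lipschitz condition to force the derivative $L$ to be an isometry of the a priori arbitrary norm $\md_{x_0}\varphi_i$ onto the Euclidean norm. Once this is in place, everything else reduces to inner regularity of $\Ha^n$ and standard approximate-affinity arguments for metrically and classically differentiable Lipschitz maps.
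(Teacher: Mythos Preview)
Your approach is correct but genuinely different from the paper's. You work at the infinitesimal level: in a bi-Lipschitz chart $\varphi_i$, the $1$-Lipschitz condition gives $|\det L|\leq \J(s)$ while mass preservation forces equality, so the linear map $L$ sends the unit ball of $s=\md_{x_0}\varphi_i$ onto the Euclidean unit ball and is therefore an isometry $(\R^n,s)\to(\R^n,\norm{\cdot}_e)$; a Kirchheim-type two-point differentiability/Egorov decomposition then upgrades this pointwise isometry of tangents to the piecewise $\tfrac12$-bi-Lipschitz bound. The paper instead argues directly with ball volumes: rectifiability (via Kirchheim) gives that the Hausdorff density of $X$ equals $1$ almost everywhere, and Egorov produces compact pieces $C_i$ on which the density ratios of balls are uniformly close to $1$; then, if $d(f(x),f(y))<\tfrac12 d(x,y)$ for $x,y\in C_i$, one sets $s=d(f(x),f(y))$, observes that $B(x,s)$ and $B(y,s)$ are disjoint in $X$ but their images lie in $B(f(x),s)\cup B(f(y),s)\subset M$, and these two balls overlap in a ball of radius $s/2$ --- comparing volumes contradicts mass preservation. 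The paper's route is more elementary (it uses only the scalar density, not the full metric differential or the equality case of the Jacobian inequality), while yours yields the extra information that the ``differential'' of $f$ is almost everywhere an isometry between the approximate tangent norm of $X$ and the Riemannian norm of $M$ --- information not needed for the lemma but conceptually closer to the classical Riemannian proof of Theorem~\ref{thme: lip-vol-rig-classic}.
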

\begin{proof}
    For $r>0$ and $x \in X$ define
    $$g_r(x) = \sup_{s\leq r}\frac{\Ha^n(B(x,s))}{\omega_n s^n} \quad \textup{and} \quad h_r(x) = \inf_{s\leq r}\frac{\Ha^n(B(x,s))}{\omega_n s^n}.$$
    Since $X$ is $n$-rectifiable we conclude that $g_r(x), h_r(x) \to 1$ as $r \to 0$ for almost all $x\in X$. It is not difficult to show that $g_r$ and $h_r$ are lower semicontinuous and upper semicontinuous, respectively. It follows from Egoroff’s theorem \cite[2.3.7]{federer-gmt} and the Borel regularity of $\Ha^n$ that for every $\epsilon>0$, there exists a compact set $C\subset X$ such that $\Ha^n(X \setminus C) < \epsilon$ and both $g_r$ and $h_r$ converge uniformly on $C$. By applying this observation repeatedly to a sequence of $\epsilon>0$ converging to zero we conclude the following. There exist countably many compact sets $C_i \subset X$ such that
    \begin{enumerate}
        \item $\Ha^n(X \setminus \bigcup_i C_i ) =0$;
        \item for every $i \in \N$ and every $\delta>0$ there exist $r>0$ such that 
        $$1-\delta \leq \inf_{s \leq r}\frac{\Ha^n(B(x,s))}{\omega_n s^n} \leq \sup_{s \leq r}\frac{\Ha^n(B(x,s))}{\omega_n s^n} \leq 1+\delta $$
        for each $x \in C_i$.
    \end{enumerate}
    Fix $i \in \N$ for the moment. Let $\delta>0$ be such that $2^n 5 \delta<1$. By (2) there exists $r>0$ such that
    \begin{equation}\label{eq: bound-volume-balls}
        \omega_n s^n - \delta \omega_n s^n  \leq \Ha^n(B(x,s)) \leq \omega_n s^n + \delta \omega_n s^n 
    \end{equation}
    for each $0<s < r$ and every $x \in C_i$. Since $M$ is a closed Riemannian manifold, we may assume that \eqref{eq: bound-volume-balls} also holds for every $x \in M$ and each $0<s < r$. Now, let $x,y \in C_i$ with $d(x,y) <2r$ and suppose that $d(f(x),f(y)) < \frac{1}{2}d(x,y)$. Set $s =  d(f(x),f(y))$. By the above and using that $f$ is mass preserving, we get
    \begin{equation*}\label{eq: f-local-bilip-1}
        \Ha^n(f(B(x,s)\cup B(y,s))) = \Ha^n(B(x,s) \cup B(y,s)) \geq 2\omega_n s^n - 2\delta \omega_n s^n.
    \end{equation*}
    On the other hand, we have $f(B(x,s)\cup B(y,s)) \subset B(f(x),s) \cup B(f(y),s)$, because $f$ is $1$-Lipschitz. Furthermore, the intersection $B(f(x),s))\cap B(f(y),s)$ contains a ball of radius $s/2$. Therefore,
    $$  \Ha^n(f(B(x,s)\cup B(y,s))) \leq \omega_n \left(2-\frac{1}{2^n}\right)s^n + \delta \omega_n s^n\left(2+\frac{1}{2^n}\right).$$
    However, by our choice of $\delta$ we have
    $$2\omega_n s^n - 2\delta \omega_n s^n > \omega_n \left(2-\frac{1}{2^n}\right)s^n + \delta \omega_n s^n\left(2+\frac{1}{2^n}\right).$$
    This is clearly a contradiction. We conclude that $\frac{1}{2}d(x,y) \leq d(f(x),f(y)) $ for all $x,y \in C_i$ with $d(x,y) \leq 2r$. The claim follows by further partitioning each $C_i$.
\end{proof}

The next lemma is the only place where we use the fact that porous sets in $X$ have measure zero. Specifically, we use the following consequence of this fact. Let $A \subset X$ be Borel and $\epsilon>0$, then for almost all $x \in A$ there exists $r>0$ such that for every $y \in B(x,r)$ there exists $z \in A$ satisfying $d(y,z) \leq \epsilon d(y,x)$.

\begin{lemma}\label{lemma: porous-degree-lipschitz}
    Let $f,g \colon X \to \R^n$ be Lipschitz and $C \subset X$ be compact such that $f(x)=g(x)$ for all $x \in C$. If porous sets in $X$ have measure zero, then for almost all $y \in \R^n$ and every $x \in f^{-1}(y) \cap C$ we have $\deg(f,x) = \deg(g,x)$. 
\end{lemma}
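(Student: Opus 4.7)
The plan is to apply the homotopy-invariance of the local degree, as recorded in the preliminaries, to the straight-line homotopy $H(z,t) = (1-t)f(z) + t g(z)$. For any $x_0 \in f^{-1}(y_0) \cap C$, one has $H(x_0,t) = f(x_0) = g(x_0) = y_0$ for all $t$, so it suffices to exhibit a neighborhood $V$ of $x_0$ on which $H(z,t) \neq y_0$ for every $(z,t) \in (V \setminus \{x_0\}) \times [0,1]$. My goal is to show that for almost every $y_0 \in \R^n$ such a $V$ exists at every preimage $x_0 \in f^{-1}(y_0) \cap C$.

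First I would collect three a.e.\ conditions on $y_0$. Using Lemma~\ref{lemma: X-rect-new} and Lemma~\ref{lemma: lip-bilip-decomp}, fix a decomposition $C = \bigsqcup_i C_i$ modulo a null set, with $\varphi_i\colon K_i \subset \R^n \to C_i$ bi-Lipschitz. Extending each $f \circ \varphi_i$ to a Lipschitz map on $\R^n$ and combining Rademacher with Sard, for a.e.\ $y_0$ every $x_0 \in f^{-1}(y_0) \cap C_i$ is a differentiability point of $f \circ \varphi_i$ with non-degenerate differential; transporting through $\varphi_i$ this yields constants $c, r>0$ with
\[
|f(w) - y_0| \geq c\, d(w, x_0) \quad \text{for all } w \in C_i \cap B(x_0, r).
\]
Next I would apply the porosity hypothesis to the Borel set $C_i \subset X$ separately: the density consequence stated right above the lemma gives, for $\Ha^n$-a.e.\ $x \in C_i$, and hence (since Lipschitz images of $\Ha^n$-null sets are $\Ha^n$-null) for a.e.\ $y_0 \in \R^n$ at every $x_0 \in f^{-1}(y_0) \cap C_i$, the property that for every $\epsilon>0$ there exists $r_\epsilon>0$ such that every $y \in B(x_0, r_\epsilon)$ admits $w \in C_i$ with $d(y, w) \leq \epsilon\, d(y, x_0)$. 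Finally, the coarea inequality (as in Lemma~\ref{lemma: mass-preserving-almost-injective}) applied to $f$ and $g$ makes the fibers $f^{-1}(y_0), g^{-1}(y_0)$ finite for a.e.\ $y_0$, so the local degrees are defined and $x_0$ is isolated in each fiber.

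Fix a $y_0$ at which all three conditions hold at every $x_0 \in f^{-1}(y_0) \cap C$, pick $x_0 \in f^{-1}(y_0) \cap C_i$, and suppose for contradiction that there exist sequences $z_n \to x_0$, $z_n \neq x_0$, and $t_n \in [0,1]$ with $H(z_n, t_n) = y_0$. If $z_n \in C$ along a subsequence then $H(z_n, t_n) = f(z_n) = y_0$, contradicting isolation; so assume $z_n \notin C$. Choose $\epsilon_n \to 0$ and, using the $C_i$-density, select $w_n \in C_i$ with $d(z_n, w_n) \leq \epsilon_n\, d(z_n, x_0)$. Using $f(w_n) = g(w_n)$ to write
\[
y_0 - f(w_n) = (1-t_n)\bigl(f(z_n)-f(w_n)\bigr) + t_n\bigl(g(z_n)-g(w_n)\bigr),
\]
the Lipschitz estimate yields $|f(w_n) - y_0| \leq L\epsilon_n\, d(z_n, x_0)$ with $L := \max\{\Lip(f), \Lip(g)\}$. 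Combined with the bi-Lipschitz lower bound $|f(w_n) - y_0| \geq c\, d(w_n, x_0) \geq c(1-\epsilon_n)\, d(z_n, x_0)$, this forces $c(1 - \epsilon_n) \leq L\epsilon_n$, contradicting $\epsilon_n \to 0$. Thus the straight-line homotopy works on a neighborhood of $x_0$, and homotopy invariance of the local degree concludes $\deg(f, x_0) = \deg(g, x_0)$.

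The main obstacle is aligning the two a.e.\ statements: I need the quantitative Rademacher--Sard lower bound and the porosity-density to hold in the \emph{same} piece $C_i$ containing $x_0$. This is resolved by applying the porosity hypothesis to each $C_i$ individually rather than to $C$, so that the witness $w_n$ delivered by the density can be chosen inside the specific chart where the differential of $f \circ \varphi_i$ is controlled, bypassing any delicate question about the behavior of $f|_C$ at points of $C$ lying outside $C_i$.
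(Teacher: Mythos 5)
Your proof is correct and follows essentially the same strategy as the paper: decompose $C$ into bi-Lipschitz pieces, invoke porosity to find nearby points of $C_i$ at which $f$ and $g$ agree, derive a linear lower bound on $|f(w)-y_0|$ over $C_i$ near $x_0$, and then use the straight-line homotopy together with homotopy invariance of the local degree. The one place you diverge technically is in producing that lower bound: the paper applies Lemma~\ref{lemma: lip-bilip-decomp} a second time, to $f\circ\varphi$, so that $f$ (and $g$) become bi-Lipschitz on each $C_i$, giving the lower bound globally on $C_i$ with uniform constant; you instead take a Rademacher--Sard route, extracting a local non-degeneracy of the differential of a Lipschitz extension of $f\circ\varphi_i$ at a.e.\ preimage point and transporting it through the bi-Lipschitz chart. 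Both work, but the paper's second decomposition is cleaner and gives uniform constants on $C_i$, whereas your route requires one extra a.e.\ condition and only yields a bound on a small ball. Your contradiction framing with sequences $z_n$ is a stylistic restatement of the paper's direct estimate on a fixed ball $B(x,r)$.
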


\begin{proof}
    We may suppose that $C = \varphi(K)$ for some bi-Lipschitz map $\varphi\colon K \to X$ because $X$ is $n$-rectifiable. By applying Lemma \ref{lemma: lip-bilip-decomp} to 
    $\eta = f \circ \varphi = g \circ \varphi$ and using that $\varphi$ is bi-Lipschitz we obtain countably many compact sets $C_i \subset C$ such that
    \begin{itemize}
        \item $\Ha^n(f(C) \setminus \bigcup_i f(C_i)) = \Ha^n(g(C) \setminus \bigcup_i g(C_i)) = 0$;
        \item $f$ and $g$ are bi-Lipschitz on each $C_i$.
    \end{itemize}
    Now, let $y \in \R^n$ be such that $f^{-1}(y) \subset \bigcup_i C_i$ and the preimage $f^{-1}(y)$ is finite. Notice that this holds for almost all $y \in \R^n$ since $f$ is Lipschitz. Let $x \in f^{-1}(y)$ and $i \in \N$ with $x \in C_i$. Then there exists $L\geq 1$ such that $f$ and $g$ are $L$-Lipschitz as well as $L$-bi-Lipschitz on $C_i$. Furthermore, since porous sets in $X$ have measure zero, there exists $r>0$ with the following property. For every $z \in B(x,r)$ there exists $z' \in C_i$ satisfying $d(z,z')\leq (3L)^{-2} d(x,z)$. For such $z$ we have
    \begin{equation}\label{eq: degree-ineq-1}
        d(f(x),f(z)) \geq d(f(x),f(z')) - d(f(z),f(z')) \geq\frac{7}{9L}d(x,z).
    \end{equation}
    Notice that the same holds if $f$ is replaced by $g$. Moreover, 
    \begin{equation}\label{eq: degree-ineq-2}
        d(f(z),g(z)) \leq d(f(z),f(z')) + d(g(z'),g(z)) \leq 2L d(z,z') \leq \frac{2}{9L}d(x,z).
    \end{equation}
    
    It follows from \eqref{eq: degree-ineq-1} that for all $z \in B(x,r) \setminus \{x\}$ we have $f(z) \neq f(x)$ and $g(z) \neq g(x)$. In particular, the local degrees $\deg(f,x)$ and $\deg(g,x)$ are well defined. Furthermore, \eqref{eq: degree-ineq-1} together with \eqref{eq: degree-ineq-2} implies that the straight-line homotopy $H$ between $f$ and $g$ satisfies $f(x) \neq H(z,t)$ for every $z \in B(x,r)\setminus \{x\}$ and every $t$. We conclude from the homotopy invariance of the local degree that $\deg(f,x)= \deg(g,x)$. This completes the proof. 
\end{proof}
We are now in a position to construct a non-trivial integral cycle in $X$. 
\begin{lemma}\label{lemma: existence-cycle-new}
    If porous sets in $X$ have measure zero, then there exists an integral cycle $T \in \bI_n(X)$ with $\spt T = X$.
\end{lemma}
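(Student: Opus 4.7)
The plan is to build $T$ chart by chart from a bi-Lipschitz atlas of $X$, with integer multiplicities coming from local degrees of $f$ read in coordinate charts of $M$. The porosity hypothesis will be used exactly to ensure that these multiplicities are well-defined.

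\emph{Atlas and multiplicities.}
By Lemma \ref{lemma: X-rect-new}, $X$ is $n$-rectifiable, so Lemma \ref{lemma: lip-bilip-decomp} (together with the inner regularity of $\Ha^n$) provides an atlas $\{(\varphi_i,K_i)\}_{i\in\N}$ with $\varphi_i\colon K_i\to X$ bi-Lipschitz, $K_i\subset\R^n$ compact, the images $\varphi_i(K_i)$ pairwise disjoint, and their union covering $X$ up to an $\Ha^n$-null set. I would then cover $M$ by finitely many orientation-preserving smooth chart domains $\beta_\alpha\colon V_\alpha\to\R^n$. For each pair $(i,\alpha)$ the Lipschitz map $\beta_\alpha\circ f\circ\varphi_i$ on $K_{i,\alpha}\defl K_i\cap\varphi_i^{-1}(f^{-1}(V_\alpha))$ admits a McShane extension $\tilde\eta_{i,\alpha}\colon\R^n\to\R^n$, and I would define, at a.e.\ $y\in K_{i,\alpha}$,
$$
\theta_i(y)\defl\sgn\det D\tilde\eta_{i,\alpha}(y)\in\{-1,0,+1\}.
$$
Applying Lemma \ref{lemma: porous-degree-lipschitz} to two such McShane extensions gives that $\theta_i$ is independent of the extension; the orientation-preservation of the transitions $\beta_\alpha\circ\beta_{\alpha'}^{-1}$ makes $\theta_i$ independent of $\alpha$. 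Combining Lemma \ref{lemma: f-bi-lip-decomp-new} with the almost injectivity of $f$ (Lemma \ref{lemma: mass-preserving-almost-injective}) should force $|\theta_i|=1$ a.e., so that $\theta_i\in L^1(K_i,\Z)$.

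\emph{Assembly and vanishing boundary.}
Next I would set $T\defl\sum_{i\in\N}\varphi_{i\#}\bb{\theta_i}$. Pairwise disjointness of the images yields $\mass(T)=\sum_i\mass(\varphi_{i\#}\bb{\theta_i})\leq\Ha^n(X)<\infty$, so that $T$ is a well-defined integer rectifiable $n$-current with parametrization $\{(\varphi_i,K_i,\theta_i)\}$. The hard part will be to show that $\partial T=0$. My plan here is to exploit the global topological orientation of $X$: for every Lipschitz $\eta\colon X\to\R^n$ the pushforward $\eta_{\#}T$ should be an integer rectifiable $n$-current in $\R^n$ whose multiplicity at a regular value $z\in\R^n$ equals $\sum_{x\in\eta^{-1}(z)}\deg(\eta,x)$, a quantity depending only on the oriented topological manifold structure of $X$ rather than on the specific charts $\varphi_i$. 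This identification, together with the slicing characterization of boundaries from \cite{ambrosio-kirchheim-2000}, should force $\partial T=0$. The argument refines the construction of the metric fundamental class in \cite[Section 5]{basso2023geometric}, with Lemma \ref{lemma: porous-degree-lipschitz} playing the role taken there by linear local contractibility.

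\emph{Full support.}
Finally, since $|\theta_i|=1$ a.e., the mass measure $\|T\|$ dominates $\Ha^n\res X$ on a set of full $\Ha^n$-measure. Because $X$ is a topological $n$-manifold and $f$ is $1$-Lipschitz, surjective and mass-preserving, every open ball in $X$ has positive $\Ha^n$-measure, so $\spt T=X$.
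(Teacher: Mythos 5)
Your outline tracks the paper's construction: the same atlas, the same integer densities (your $\sgn\det D\tilde\eta_{i,\alpha}$ coincides a.e.\ with the paper's $\deg(\tilde g_i,\cdot)$ once you read $f\circ\varphi_i$ through an orientation-preserving chart), the same current $T=\sum_i\varphi_{i\#}\bb{\theta_i}$, and the same intended identity $T(1,\pi)=\tfrac{1}{\deg(f)}\int_{\R^n}\deg(\pi)\,d\Ha^n=0$. But you misplace the use of Lemma~\ref{lemma: porous-degree-lipschitz}, which is the crux. Where you invoke it, for ``$\theta_i$ is independent of the McShane extension,'' it is both a misapplication and unnecessary: Lemma~\ref{lemma: porous-degree-lipschitz} concerns Lipschitz maps \emph{out of} $X$, whereas your two extensions are maps $\R^n\to\R^n$, and for such maps agreeing on a measurable set $K$ the differentials already coincide $\leb^n$-a.e.\ on $K$ by Rademacher plus Lebesgue density, so nothing beyond Euclidean measure theory is needed.

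The genuine gap is the step you merely assert: that for a.e.\ $z$ the multiplicity of $\pi_\# T$ equals $\sum_{x\in\pi^{-1}(z)}\deg(\pi,x)$, ``depending only on the oriented manifold structure.'' This equates the chart-level datum $\theta_i(y)\,\sgn\det D_y\psi$ (with $\psi$ a Lipschitz extension of $\pi\circ\varphi_i$) with the topological local degree $\deg(\pi,\varphi_i(y))$, and that comparison is exactly where the porosity hypothesis enters. The paper proves it by choosing a Lipschitz extension $\eta\colon M\to\R^n$ of $\pi\circ(f|_{C_i})^{-1}$, using the multiplicativity of the local degree to get $\theta_i(y)\,\sgn\det D_y\psi=\deg(\eta,\tilde g_i(y))$, and then applying Lemma~\ref{lemma: porous-degree-lipschitz} to the pair $\pi$ and $\eta\circ f$ (both Lipschitz $X\to\R^n$, equal on $C_i$) to obtain $\deg(\pi,x)=\deg(\eta\circ f,x)=\deg(\eta,f(x))\cdot\deg(f)$ at a.e.\ $x\in C_i$. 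Without this comparison, $\deg(\pi,x)$ is not even a priori well defined at a.e.\ $x$ for a merely Lipschitz $\pi$, let alone equal to your chart-level multiplicity, so the identification you want has no foundation. Make that comparison explicit and the rest closes directly: $T(1,\pi)=0$ is, by definition, $\partial T=0$; no appeal to a slicing characterization is needed.
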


\begin{proof}
    Recall that $X$ is $n$-rectifiable. By Lemma \ref{lemma: f-bi-lip-decomp-new} there exists an atlas $\big\{ \varphi_i,K_i\big\}$ of $X$ 
    such that $f$ is bi-Lipschitz on each $C_i = \varphi(K_i)$. For $i \in \N$, set $g_i = f \circ \varphi_i$. Since $M$ is an absolute Lipschitz neighborhood retract \cite[Theorem~3.1]{hohti-1993}, we conclude that for each $i \in \N$ there exists a Lipschitz extension $\tilde{g}_i \colon U_i \to M$ of $g_i$ to some open neighborhood $U_i$ of $K_i$. Since every $\tilde{g_i}$ is Lipschitz and bi-Lipschitz on $K_i$, it follows that for each $i \in \N$ and for almost all $z \in K_i$, the extension $\tilde{g}_i$ is differentiable and the differential $D_z \tilde{g}_i$ is non-degenerate. Therefore, for such $z$ we have $\deg(\tilde{g_i},z) = \sgn(\det(D_z g))$ and in particular, the local degree $\deg(\tilde{g_i},z)$ equals $1$ or $-1$. Define
    $$T= \sum_i \varphi_{_i\#}\bb{\theta_i},$$
    where $\theta_i(z)= \deg(\tilde{g_i},z)$. One shows exactly as in the proof of \cite[Lemma 5.1]{basso2023geometric} that $T$ is an integer rectifiable $n$-current of finite mass. Clearly, the support of $T$ equals all of $X$. It remains to prove that $\partial T =0$. More precisely, we have to show that
    $$T(1,\pi) = \sum_i \varphi_{i\#} \bb{\theta_i}(1,\pi) =0$$
    for all $\pi \in \Lip(X,\R^n)$. Fix $i \in \N$ and $\pi \in \Lip(X,\R^n)$ for the moment. Let $\psi \colon \R^n \to \R^n$ be a Lipschitz extensions of $\pi \circ \varphi_i$ and $\eta\colon M \to \R^n$ be a Lipschitz extension of $\pi \circ f_i^{-1} \colon f(C_i) \to \R^n$. Notice that $\eta$ satisfies $\eta \circ \tilde{g}_i (z) = \pi \circ \varphi_i(z)$ for all $z \in K_i$. In particular, $\eta \circ \tilde{g_i}$ is another Lipschitz extension of $\pi \circ \varphi_i $. It follows from the coarea inequality that the preimages $\psi^{-1}(y)$ and $(\eta  \circ \tilde{g}_i)^{-1}(y)$ are finite for almost every $y\in \R^n$. Moreover, the area formula implies, that for almost all such $y$ and every $z\in K_i\cap \psi^{-1}(y)$ the following holds. The Lipschitz maps $\psi$, $ \tilde{g_i}$ and $\eta$ are differentiable with non-degenerate differential at $z$ and $g(z)$, respectively, as well as $D_z(\eta \circ \tilde{g_i}) = D_z\psi$. By the multiplicity of the degree we conclude that for such $z$
    \begin{equation*}\label{eq: degree-equal-extensions}
         \sgn(\det  D_z\psi)= \sgn(\det  D_z(\eta \circ \tilde{g_i})) = \deg(\eta \circ \tilde{g_i},z) =  \deg(\tilde{g_i},z) \cdot \deg(\eta,\tilde{g_i}(z)).
     \end{equation*}
    The local degrees appearing in the previous chain of equalities are well defined because the preimages $\psi^{-1}(y)$ and $(\eta  \circ \tilde{g}_i)^{-1}(y)$ are finite. Therefore,
    
    \begin{align*}
        \varphi_{i\#}\bb{\theta_i}(1,\pi) &= \int_{K_i} \theta_i(z) \det(D_z \psi) \; d\Ha^n(z)
        \\
        &= \int_{K_i} \theta_i(z) \sgn(\det(D_z \psi)) |\det(D_z \psi)| \; d\Ha^n(z)
        \\
        &= \int_{K_i} \deg(\eta,\tilde{g_i}(z))  |\det(D_z \psi)| \; d\Ha^n(z).
    \end{align*}

    Recall that for almost all $z \in M$ the preimage $f^{-1}(z)$ consists of a single point. Since $f$ is mass preserving, the additivity property of the degree implies that $\deg(f,x) = \deg(f)$ for almost all $x \in X$. Moreover, for each $x \in C_i$ we have $(\eta\circ f)(x) = \pi(x)$. It follows from Lemma \ref{lemma: porous-degree-lipschitz} and the multiplicity of the degree that for almost all $y\in \R^n$ and every $x \in \pi^{-1}(y) \cap K_i $
    $$\deg(\pi,x) = \deg(\eta\circ f,x) =  \deg(\eta,f(x))\cdot \deg(f).$$

    Thus, the area formula yields

    \begin{align*}
        &\int_{K_i} \deg(\eta,\tilde{g_i}(z))  |\det(D_z \psi)| \; d\Ha^n(z)
        \\
        = &\int_{\R^n}\left(\sum_{x\in \pi^{-1}(y) \cap C_i} \deg(\eta, f(x))\right)\,d\Ha^n(y).
        \\
        = & \frac{1}{\deg(f)}\int_{\R^n}\left(\sum_{x\in \pi^{-1}(y) \cap C_i} \deg(\pi, x)\right)\,d\Ha^n(y).
    \end{align*}

    We conclude that

    $$T(1,\pi) = \sum_i \varphi_{i\#} \bb{\theta}(1,\pi) = \frac{1}{\deg(f)} \int_{\R^n} \left( \sum_{x \in \pi^{-1}(y)} \deg(\pi,x) \right) \, d\Ha^n(y)$$

    for every $\pi \in \Lip(X,\R^n)$. It follows from the additivity property of the degree that for every $y \in \R^n$ for which the preimage $\pi^{-1}(y)$ is finite we have
    $$\sum_{x \in \pi^{-1}(y)} \deg(\pi,x) = \deg(\pi).$$
    Since $X$ is closed, we have $\deg(\pi) = 0$, see e.g.~\cite[p. 269]{dold-1980}. This shows $T(1,\pi)=0$ and completes the proof.    
\end{proof}

Finally, we can give the proof of the main theorem.

\begin{proof}[Proof of Theorem~\ref{thme: main}]
    Let $T \in \bI_n(X)$ be the integral cycle given by Lemma \ref{lemma: existence-cycle-new}. Since $f_\# T$ defines an integral $n$-cycle in $M$, there exists $k \in \Z$ such that $f_\# T = k \cdot \bb{M}$. For almost all $y \in M$ the preimage $f^{-1}(y)$ consists of a single point. From this, one can show exactly as in the proof of \cite[Lemma 8.2]{basso2023geometric}, that $k \neq 0$ and that $S = \frac{1}{k}\cdot T$ defines an integral $n$-cycle satisfying $\mass^{\rm b}(S)\leq \Ha^n(M)$. Clearly, $f_\# S = \bb{M}$ and $\spt S= X$. Therefore, Theorem \ref{thme: lip-vol-rigidity-zust} implies that $f\colon X \to M$ is an isometric homeomorphism.  
\end{proof}

\bibliographystyle{plain}

\end{document}